\newtheorem{theorem}{Theorem}
\newtheorem{proposition}{Proposition}
\newtheorem{corollary}{Corollary}
\title{Morse index of saddle equilibria of gradient-like flows on connected sums of $\mathbb{S}^{n-1}\times \mathbb{S}^1$}
\author{Vyacheslav Grines}
\address{National Research University higher School of Economics, Nizhnii Novgorod}
\email{vgrines@yandex.ru}
\author{Elena Gurevich}
\address{National Research University higher School of Economics, Nizhnii Novgorod}
\email{egurevich@hse.ru}
\author{Sergiy Maksymenko}
\address{Institute of Mathematics NAS of Ukraine, Kyiv}
\email{maks@imath.kiev.ua}
\keywords{Homology of connected sum of $S^{n-1}\times S^1$, Morse index, gradient-like flows, topological classification}
\thanks{Research of V.~Grines and E.~Gurevich is done under financial support of RFS, grant 21-11-00010.}
\begin{document}

\begin{abstract}
Let $M$ be either $n$-sphere $\mathbb{S}^{n}$ or a connected sum of finitely many copies of $\mathbb{S}^{n-1}\times \mathbb{S}^{1}$, $n\geq4$.
A flow $f^t$ on $M$ is called gradient-like whenever its non-wandering set consists of finitely many hyperbolic equilibria and their invariant manifolds intersects transversally.
We prove that if invariant manifolds of distinct saddles of a gradient-like flow $f^t$ on $M$ do not intersect each other (in other words, $f^t$ has no heteroclinic intersections), then for each saddle of $f^t$ its Morse index (i.e.\ dimension of the unstable manifold) is either $1$ or $n-1$, so there are no saddles with Morse indices $i\in\{2,\ldots,n-2\}$.
\end{abstract}

\maketitle

\section{Introduction and statements of results}

Recall that a flow $f^t$ on a smooth closed manifold is called {\it gradient-like} if its non-wandering set consists of finitely many hyperbolic equilibria and their invariant manifolds have only transversal intersections.
In~\cite{Sm61} Smale proved that any closed manifold admits a Morse function (that is a smooth function all of whose critical points are non-generated) such that its gradient flow with respect to some Riemannian metric, is gradient-like.
In~\cite{Smale60a} Smale also established Morse inequalities for gradient-like flows.
Those inequalities relate the structure of the non-wandering set of a flow with the topology of the ambient manifold.
As Proposition~\ref{manman} below shows, this connection becomes more clear for gradient-like flows without heteroclinic intersection.

Let $M^n$ be a connected closed oriented manifold of dimension $n\geq 2$.
We will say that a gradient-like flow $f^t$ on $M^n$ belongs to the class $G(M^n)$ whenever the following two conditions hold:
\begin{itemize}
\item[(a)]
Morse index (i.e.\ dimension of the unstable manifold) of a saddle equilibrium state of the flow $f^t$ equals either $1$ or $n-1$;
\item[(b)]
invariant manifolds of distinct saddle equilibria do not intersect each other.
\end{itemize}

Denote by $\nu_{f^t}$ and  $\mu_{f^t}$ the numbers of saddle and node equilibria of the flow $f^t\in G(M^n)$ and set
\[
    g_{f^t}=(\nu_{f^t}-\mu_{f^t}+2)/2.
\]

Everywhere below $\mathcal{S}^n_g$ stands for a manifold being homeomorphic either to the sphere $\mathbb{S}^n$ if $g=0$ or to a connected sum of $g>0$ copies of $\mathbb{S}^{n-1}\times \mathbb{S}^{1}$.

\begin{proposition}\label{manman}
Let $f^t\in G(M^n)$, $n\geq 2$ and $g=g_{f^t}$.
Then $M^n$ is homeomorphic to $\mathcal{S}^n_{g}$.
\end{proposition}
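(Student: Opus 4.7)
The plan is to obtain a splitting $M^n = V^- \cup_{\Sigma} V^+$ into two standard $1$-handlebodies from a Lyapunov function of $f^t$, and then recognize the gluing as giving $\mathcal{S}^n_g$.

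First, I would construct a smooth Morse function $\varphi: M^n \to \mathbb{R}$ whose critical set coincides with the equilibria of $f^t$ (with matching Morse indices) and which strictly decreases along non-constant orbits, via the standard Lyapunov/Smale--Meyer construction. Because $\varphi$ decreases along orbits, the values at sinks automatically lie below those at $1$-saddles, and those at $(n-1)$-saddles below those at sources. The only nontrivial rearrangement is to place the $1$-saddles strictly below the $(n-1)$-saddles; here both hypotheses of $G(M^n)$ are used. Condition (b) forbids flow lines from $(n-1)$-saddles to $1$-saddles, while for $n \geq 3$ the transversal dimension count
\[
\dim\bigl(W^u(\text{$1$-saddle}) \cap W^s(\text{$(n-1)$-saddle})\bigr) = 2-n < 0
\]
forbids flow lines in the other direction. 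Milnor's rearrangement theorem then gives a regular value $c$ with
\[
\varphi(\text{sinks}) < \varphi(\text{$1$-saddles}) < c < \varphi(\text{$(n-1)$-saddles}) < \varphi(\text{sources}),
\]
so that $V^- := \{\varphi \leq c\}$ inherits a handle decomposition with $\omega$ $0$-handles (around sinks) and $\nu_1$ $1$-handles (around $1$-saddles), and dually $V^+ := \{\varphi \geq c\}$ is a handlebody with $\alpha$ $0$-handles and $\nu_{n-1}$ $1$-handles, sharing the common boundary $\Sigma := \{\varphi = c\}$.

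Next I would identify $V^-$ and $V^+$ explicitly. For $n \geq 3$, every connected orientable $1$-handlebody with $v$ $0$-handles and $e$ $1$-handles is diffeomorphic to $\natural^{e-v+1}(\mathbb{S}^1 \times D^{n-1})$ with connected boundary $\#^{e-v+1}(\mathbb{S}^1 \times \mathbb{S}^{n-2})$. Hence components of $V^\pm$ correspond bijectively to components of $\Sigma$, and since $M^n$ is connected both $V^-$ and $V^+$ must themselves be connected. Matching boundaries then forces a common ``genus'' $g$, and a direct count of handles (or Poincar\'e duality on the Morse chain complex) identifies this common value as $g = (\nu_{f^t} - \mu_{f^t} + 2)/2 = g_{f^t}$.

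The main obstacle is the final identification of $V^- \cup_{\Sigma} V^+$ with $\mathcal{S}^n_g$. The standard model $\mathcal{S}^n_g$ admits a splitting of exactly the same form, so it suffices to show that the gluing $\phi: \partial V^- \to \partial V^+$ induced by the flow is equivalent, up to self-diffeomorphisms of $\natural^g(\mathbb{S}^1 \times D^{n-1})$, to the standard one. For $n \geq 5$ I expect this to follow from Smale-style handle cancellation applied to the handle decomposition, whose handles all have index $0$, $1$, $n-1$, or $n$. The case $n=2$ reduces to an Euler-characteristic argument. The genuinely delicate cases are $n = 3, 4$, requiring specifically low-dimensional input: Heegaard-splitting theory in dimension $3$ (with condition (b) forcing reducibility of the splitting so that it decomposes into genus-$1$ splittings, each of which gives $\mathbb{S}^3$ or $\mathbb{S}^2 \times \mathbb{S}^1$), and analysis of the diffeomorphism group of $\#^g(\mathbb{S}^1 \times \mathbb{S}^2)$ combined with extension over the handlebody in dimension $4$.
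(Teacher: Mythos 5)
Your construction of the splitting is sound: up to the point where you write $M^n=\natural^{g}(\mathbb{S}^1\times D^{n-1})\cup_{\Sigma}\natural^{g}(\mathbb{S}^1\times D^{n-1})$ with $g=g_{f^t}$, the argument is essentially complete. Condition (b) together with the transversality dimension count does allow the critical values to be ordered as you claim, the identification of each side with a boundary connected sum of copies of $\mathbb{S}^1\times D^{n-1}$ (orientability excluding the twisted bundles) is correct for $n\geq3$, and the count $2g=\nu_{f^t}-\mu_{f^t}+2$ follows. For comparison, the paper gives no proof at all of this proposition: it cites \cite{Smale60a} for $n=2$ and \cite{BGMP02, GrGuPo-matan} for $n\geq3$, and those works argue differently --- by induction on the number of saddles, cutting $M^n$ along the $(n-1)$-spheres arising as closures of one-dimensional or codimension-one separatrices and recognizing the pieces via the generalized Schoenflies theorem. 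That route treats all dimensions uniformly and never meets your final gluing problem.

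The final recognition step is where your proposal has genuine gaps. In dimension $3$ the claim that the splitting ``decomposes into genus-$1$ splittings, each of which gives $\mathbb{S}^3$ or $\mathbb{S}^2\times\mathbb{S}^1$'' fails as stated: a genus-$1$ Heegaard splitting also produces every lens space $L(p,q)$, and nothing in your outline excludes torsion in $H_1(M^3)$ (for $n=3$ the $(n-1)$-handles are $2$-handles, so $H_1$ is not automatically free, unlike the case $n\geq4$); moreover, reducibility of the splitting is not a formal consequence of condition (b) --- establishing it is precisely the hard content of \cite{BGMP02}. For $n\geq5$, ``Smale-style handle cancellation'' is not the right mechanism: when $g>0$ nothing cancels, since all $1$- and $(n-1)$-handles are homologically essential. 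What you actually need is either that the attaching $(n-2)$-spheres of the $(n-1)$-handles can be isotoped to the standard cospheres in $\sharp^{g}(\mathbb{S}^1\times\mathbb{S}^{n-2})$, or that every self-diffeomorphism of $\sharp^{g}(\mathbb{S}^1\times\mathbb{S}^{n-2})$ extends over $\natural^{g}(\mathbb{S}^1\times D^{n-1})$; the latter is the Laudenbach--Po\'enaru theorem only for $n=4$ (so that case of your sketch is essentially fine) and is not available off the shelf in higher dimensions. So your skeleton is a legitimate alternative, but the recognition step must be replaced --- most naturally by the cut-along-separatrix-spheres induction of the cited papers.
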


For $n=2$ Proposition~\ref{manman} immediately follows from~\cite{Smale60a}, and for $n\geq 3$ from~\cite{BGMP02, GrGuPo-matan}, where its analogue for Morse-Smale diffeomorphisms without heteroclinic curves was obtained.

\begin{theorem}\label{good_index}
Let $f^t$ be a gradient-like flow on $\mathcal{S}^n_g$, $g\geq 0$, $n\geq4$.
If invariant manifolds of distinct saddle equilibria of $f^t$ do not intersect each other, then the Morse index of each saddle equilibrium equals either $1$ or $(n-1)$, that is $f^t\in G(\mathcal{S}^n_g)$.
Moreover, there exists $k\geq0$ such that $\nu_{f^t} = 2g+k$ and $\mu_{f^t}=k+2$.
\end{theorem}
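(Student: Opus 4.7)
The plan is to associate to $f^t$ a cellular (Morse) chain complex of $M$ and to exploit the no-heteroclinic hypothesis to force the cellular boundary operators to vanish in all intermediate degrees; this will identify $H_i(M;\mathbb{Z})$ with the free abelian group generated by the index-$i$ saddles for $2\le i\le n-2$, and the vanishing of this homology for $\mathcal{S}^n_g$ will then preclude saddles of intermediate Morse index. The counting identities for $\nu_{f^t}$ and $\mu_{f^t}$ will fall out of Proposition~\ref{manman}.

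First, using the gradient-like structure of $f^t$, one constructs a smooth energy (Lyapunov) function $F\colon M\to\mathbb{R}$ whose critical set coincides with the equilibria of $f^t$, with matching Morse indices and pairwise distinct critical values; this is standard Smale--Conley theory for gradient-like flows. The induced handle/CW decomposition of $M$ has one $i$-cell per critical point of index $i$, and the cellular boundary $\partial_i\colon C_i\to C_{i-1}$ is computed by the signed count of flow trajectories joining critical points of consecutive indices, equivalently of transverse intersection points of $W^u(p)\cap W^s(q)$ for $\operatorname{ind}(p)=i$ and $\operatorname{ind}(q)=i-1$.

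The key step is now immediate. For any two distinct saddles $p,q$ of $f^t$, the hypothesis gives $W^u(p)\cap W^s(q)=\varnothing$, so $\langle\partial_i[p],[q]\rangle=0$. When $2\le i\le n-1$, the groups $C_i$ and $C_{i-1}$ are generated solely by saddles (sinks contribute only $0$-cells and sources only $n$-cells), so $\partial_i\equiv 0$. Consequently, for $2\le i\le n-2$,
\[
H_i(M;\mathbb{Z})\;=\;\ker\partial_i/\operatorname{im}\partial_{i+1}\;=\;C_i.
\]
But $M=\mathcal{S}^n_g$ with $n\ge 4$ has $H_i(M;\mathbb{Z})=0$ in this range (the homology of a connected sum of copies of $\mathbb{S}^{n-1}\times\mathbb{S}^1$ is concentrated in degrees $0,1,n-1,n$), whence $C_i=0$: there are no saddles of index $i\in\{2,\ldots,n-2\}$, establishing $f^t\in G(\mathcal{S}^n_g)$.

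For the counts, Proposition~\ref{manman} then applies and yields $M\cong\mathcal{S}^n_{g_{f^t}}$; since distinct values of $g$ give non-homeomorphic manifolds (e.g.\ distinguished by $\operatorname{rank} H_1$), this forces $g=g_{f^t}=(\nu_{f^t}-\mu_{f^t}+2)/2$. Since the energy function attains its minimum and maximum on $M$, the flow $f^t$ has at least one sink and at least one source, so $\mu_{f^t}\ge 2$. Setting $k:=\mu_{f^t}-2\ge 0$ yields the advertised identities $\nu_{f^t}=2g+k$ and $\mu_{f^t}=k+2$. The main technical obstacle is rigorously setting up the cellular Morse complex and justifying the trajectory-count formula for $\partial_i$ in the purely dynamical setting of a gradient-like flow (as opposed to an honest gradient); this can be handled either via the Lyapunov function together with standard Morse--Smale handle theory, or more intrinsically via the attractor filtration $A_i:=\bigcup\{\overline{W^u(p)}:\operatorname{ind}(p)\le i\}$ and the long exact sequences of the pairs $(A_i,A_{i-1})$, in which the no-heteroclinic assumption kills the connecting homomorphisms.
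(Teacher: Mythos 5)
Your argument is correct, but it proves the key step by a genuinely different route than the paper. The paper's proof is local and geometric: assuming a saddle $\sigma$ of index $i\in\{2,\dots,n-2\}$ exists, it invokes Smale's theorem to see that $cl\,W^u_\sigma$ and $cl\,W^s_\sigma$ are spheres of complementary dimensions meeting transversally in the single point $\sigma$, hence with intersection number $\pm1$; since $H_i(\mathcal{S}^n_g)=H_{n-i}(\mathcal{S}^n_g)=0$ in that range (Corollary~\ref{invind}), these spheres are null-homologous and the classical Seifert--Threlfall result forces the intersection number to be $0$ --- a contradiction. Your proof is global and algebraic: you build the Thom--Smale chain complex of a Lyapunov function and observe that the no-heteroclinic hypothesis annihilates every boundary operator between saddle-generated chain groups, so $H_i\cong C_i$ for $2\le i\le n-2$ and the vanishing of homology kills the generators. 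Both work; the paper's route needs only the Mayer--Vietoris computation plus an elementary intersection-number fact (at the cost of worrying about smoothness of the spheres at the node points), while yours needs the heavier but standard machinery of energy functions and handle decompositions for gradient-like flows (Smale, Meyer, Franks) --- and in exchange recovers Proposition~\ref{manman} as a by-product. Note that the ``main technical obstacle'' you flag is milder than you suggest: you do not need the full signed trajectory-count formula for $\partial_i$, only that $\partial_i=0$ when the relevant invariant manifolds are disjoint, and this follows directly because the attaching sphere of each $i$-handle, missing every $W^s(q)$ for saddles $q$, flows entirely into the $0$-handles, whose $(i-1)$-st homology vanishes. Your derivation of the counts is essentially the paper's, differing only in that you obtain $k\ge0$ from $\mu_{f^t}\ge2$ whereas the paper obtains it from the Morse inequality $c_1,c_{n-1}\ge g$; the two are equivalent once $g=g_{f^t}$ is known.
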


Thus Theorem~\ref{good_index} claims that for manifolds $\mathcal{S}^n_g$, $n\geq4$, condition (b) implies condition (a).

For case $g=0$ Theorem~\ref{good_index} is proved in~\cite{Pil78}, where necessary and sufficient conditions of topological equivalence of flows from class $G(S^n)$, $n\geq 3$, are obtained.
Theorem~\ref{good_index} will allow to obtain topological classification of flows from class $\mathcal{S}^n_g$, $g>0$, in combinatorial terms using techniques of~\cite{Pil78, GrGuPoMa-non}.

\section{Homology of connected sums of $g$ copies of $\mathbb{S}^{n-1}\times \mathbb{S}^1$}
Everywhere below we consider homology only with integer coefficients.
Our aim is to prove that $H_i(\mathcal{S}^n_g)=0$ for $i\in\{2,\ldots,n-2\}$, $n\geq4$, see\ Corollary~\ref{invind} below.

It will be convenient to say that a topological space $X$ belongs to a class $\mathcal{Z}$ whenever its singular homology groups (with integer coefficients) are free abelian groups of finite ranks and distinct from zero only at finitely many dimensions.
Set $\beta_i(X) = \mathrm{rank}\,H_i(X)$ for $i\geq0$.
Then we have a well defined finite sum
\[
    p_X(t) = \sum_{i}\beta_i(X) t^i,
\]
called the \emph{Poincare polynomial} of the space $X$.

In particular, if $X,Y\in\mathcal{Z}$, then K\"{u}nneth theorem implies that
\[
    H_i(X\times Y)= \mathop{\oplus}\limits_{a+b=i} H_a(X) \otimes H_b (Y)
\]
for all $i\geq0$ (see, for example,~\cite[Corollary (29.11.1)]{GeenbergHarper}).
Since homology groups of $X$ and $Y$ are supposed to be free abelian, it follows that
\[ \mathrm{rank}\,(H_a(X) \otimes H_b (Y))=\mathrm{rank}\,H_a(X) \, \mathrm{rank}\,H_b (Y),\]
which implies that
\begin{equation}\label{prod}
p_{X\times Y}(t) = p_{X}(t)\,p_{Y}(t).
\end{equation}

\begin{proposition}\label{pr:homologies}
Let $X, Y$ be connected oriented closed manifolds of dimension $n\geq2$.
Then
\begin{align*}
    H_0(X\sharp Y)&=H_n(X\sharp Y)=\mathbb{Z},
    &
    H_i(X\sharp Y) &= H_i(X) \oplus H_i(Y),\ (1\leq i\leq n-1).
\end{align*}
In particular, if $X,Y\in\mathcal{Z}$, then $X\sharp Y\in\mathcal{Z}$ and
\begin{equation}\label{connected_sum}
p_{X \sharp Y}(t) = 1 + \sum_{i=1}^{n-1} \bigl(\beta_i(X) + \beta_i(Y)\bigr) t^i + t^n.
\end{equation}
\end{proposition}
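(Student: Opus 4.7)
The plan is to reduce to a Mayer--Vietoris argument after first computing the homology of the punctured manifolds. Write $X \sharp Y = X_0 \cup_{S^{n-1}} Y_0$, where $X_0 := X \setminus B_X$ and $Y_0 := Y \setminus B_Y$ are obtained by removing a small open $n$-ball from $X$ and from $Y$, and take slightly enlarged open neighborhoods $U \supset X_0$ and $V \supset Y_0$ in $X \sharp Y$ which deformation retract onto $X_0$ and $Y_0$, respectively, and whose intersection $U \cap V$ deformation retracts onto the separating sphere $S^{n-1}$.

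First I would compute $H_*(X_0)$. Excision gives $H_i(X, X_0) \cong H_i(D^n, S^{n-1})$, which is $\mathbb{Z}$ in degree $n$ and zero elsewhere, so the long exact sequence of the pair $(X,X_0)$ yields $H_i(X_0) \cong H_i(X)$ for $1 \leq i \leq n-2$ at once. For the top two degrees one combines two facts: $H_n(X_0) = 0$, because $X_0$ is a non-compact connected $n$-manifold, and the fundamental class generates $H_n(X)\cong\mathbb{Z}$ and maps isomorphically onto $H_n(X, X_0) \cong \mathbb{Z}$. The long exact sequence then forces $H_{n-1}(X_0) \cong H_{n-1}(X)$ as well, and the same calculation applies to $Y_0$.

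Next I would feed this into the Mayer--Vietoris sequence of the cover $X \sharp Y = U \cup V$. Since $H_i(S^{n-1}) = 0$ for $1 \leq i \leq n-2$, in that range the sequence breaks into short exact pieces
\[
0 \to H_i(X_0)\oplus H_i(Y_0) \to H_i(X\sharp Y) \to 0,
\]
giving the claimed decomposition $H_i(X\sharp Y) = H_i(X)\oplus H_i(Y)$. The degree $0$ piece is $\mathbb{Z}$ by connectedness, and the degree $n$ piece is $\mathbb{Z}$ because the connected sum of closed oriented $n$-manifolds is again a closed oriented $n$-manifold.

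The main obstacle is the segment around degree $n-1$, namely
\[
0 \to H_n(X\sharp Y) \xrightarrow{\partial} H_{n-1}(S^{n-1}) \to H_{n-1}(X_0)\oplus H_{n-1}(Y_0) \to H_{n-1}(X\sharp Y) \to 0.
\]
Here one must check that $\partial$ is an isomorphism; equivalently, that the fundamental class of $X \sharp Y$ is sent to a generator of $H_{n-1}(S^{n-1}) \cong \mathbb{Z}$. This is the standard orientation computation: the fundamental class of a closed oriented $n$-manifold restricts under the connecting homomorphism associated to an embedded open disk to the orientation class of the bounding sphere. Granting this, the map $H_{n-1}(S^{n-1}) \to H_{n-1}(X_0)\oplus H_{n-1}(Y_0)$ vanishes and we conclude $H_{n-1}(X\sharp Y) \cong H_{n-1}(X)\oplus H_{n-1}(Y)$. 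The Poincar\'{e} polynomial identity \eqref{connected_sum} then follows by taking ranks, using $\beta_0=\beta_n=1$ for $X$, $Y$ and $X\sharp Y$ to separate out the initial $1$ and the trailing $t^n$; closure of $\mathcal{Z}$ under $\sharp$ is immediate since finite direct sums of finitely generated free abelian groups are again of that form.
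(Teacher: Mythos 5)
Your argument is correct and follows the same Mayer--Vietoris skeleton as the paper: decompose $X\sharp Y$ into the two punctured summands glued along the separating $\mathbb{S}^{n-1}$ and read off the splitting in degrees $1\leq i\leq n-1$ from the vanishing of $H_i(\mathbb{S}^{n-1})$ in intermediate degrees. You diverge in how the two auxiliary facts are justified. First, you identify $H_i(X_0)\cong H_i(X)$ for $1\leq i\leq n-1$ up front, via excision and the long exact sequence of the pair $(X,X_0)$ (using $H_i(X,X_0)\cong H_i(D^n,\mathbb{S}^{n-1})$ together with $H_n(X_0)=0$ and the local characterization of the fundamental class); the paper instead first proves $H_i(X\sharp Y)\cong H_i(X')\oplus H_i(Y')$ for all $1\leq i\leq n-1$ and then deduces $H_i(X')\cong H_i(X)$ by applying that very statement to $X\cong X\sharp \mathbb{S}^{n}$, where the second punctured piece is a disk --- a slicker self-referential trick that avoids the relative homology computation. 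Second, at the delicate degree $n-1$ you show the connecting map $H_n(X\sharp Y)\to H_{n-1}(\mathbb{S}^{n-1})$ sends the fundamental class to a generator, whereas the paper observes that $\mathbb{S}^{n-1}=\partial X'=\partial Y'$ is null-homologous in each piece, so the map $H_{n-1}(\mathbb{S}^{n-1})\to H_{n-1}(X')\oplus H_{n-1}(Y')$ vanishes; by exactness these are equivalent assertions, and the paper's version even yields $H_n(X')=0$ as an output rather than an input. Two cosmetic points: since $B_X$ is an \emph{open} ball, $X_0=X\setminus B_X$ is compact with boundary, so your justification for $H_n(X_0)=0$ should read ``connected $n$-manifold with nonempty boundary'' (or pass to its interior); and for the short exact piece at $i=1$ one also needs injectivity of $H_0(\mathbb{S}^{n-1})\to H_0(X_0)\oplus H_0(Y_0)$, which holds by connectedness of all three spaces and which the paper states explicitly.
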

\begin{proof}
The identities $H_0(X\sharp Y)=H_n(X\sharp Y)=\mathbb{Z}$ follow from the fact that $X\sharp Y$ is a connected orientable manifold.

Let us prove the remained statements.
Let $D_1^n \subset X$, $D_2^n\subset Y$ be closed $n$-disks,
\[ \pi: (X\setminus \mathrm{int}\,D_1) \cup (Y\setminus \mathrm{int}\,D_2)\to X\sharp Y \]
be the natural quotient map, and
\begin{align*}
    X' &= \pi(X\setminus \mathrm{int}\,D_1), &
    Y' &= \pi(Y\setminus \mathrm{int}\,D_2), &
    \mathbb{S}^{n-1}&=X'\cap Y' \subset X \sharp Y.
\end{align*}

We will show that $H_{n}(X') = 0$ and that for any $1\leq i\leq n-1$ the group $H_i(X\sharp Y)$ is isomorphic to  $H_i(X') \oplus H_i(Y')$.
Then, due to the fact that $X$ is homeomorphic with $X \sharp \mathbb{S}^{n}$, the group $H_i(X)$ will be isomorphic with $H_i(X')\oplus H_i(D_1)$ which in turn is isomorphic with $H_i(X')$.
This will prove that $H_i(X\sharp Y)$ is isomorphic with $H_i(X) \oplus H_i(Y)$ for $1\leq i\leq n-1$, that will immediately imply~\eqref{connected_sum}.

Consider the exact Mayer-Vietoris sequence for $(X\sharp Y; X', Y')$ (see, for example,~\cite[(17.7)]{GeenbergHarper}:
\[
\cdots \to
H_i(\mathbb{S}^{n-1}) \xrightarrow[z \mapsto (\alpha_i(z), \beta_i(z))]{\mathbf{A}_i}
H_i(X')\oplus H_i(Y')
\xrightarrow{\mathbf{B}_i}
H_i(X\sharp Y) \xrightarrow{\partial_i}
H_{i-1}(\mathbb{S}^{n-1})  \xrightarrow{\mathbf{A}_{i-1}} \cdots
\]
where $\alpha_i$ and $\beta_i$ are induced by embeddings $\mathbb{S}^{n-1} \subset X'$ and $\mathbb{S}^{n-1} \subset Y'$, respectively.
Then, due to the fact that $H_i(\mathbb{S}^{n-1})=0$ for $i\not=0, n-1$, we obtain that $\mathbf{B}_{n-1}$ is an epimorphism, $\mathbf{B}_i$ is an isomorphism for  $2\leq i \leq n-2$, and $\mathbf{B}_{1}$  is a monomorphism.

On the other hand, since $\mathbb{S}^{n-1} = \partial X' = \partial Y'$, it follows that $\alpha_{n-1}$, $\beta_{n-1}$, and, therefore, $\mathbf{A}_{n-1}$, are zero homomorphisms.
Hence $\mathbf{B}_{n-1}$ is injective and therefore is an isomorphism.
Moreover, as $\mathbb{S}^{n-1}$, $X'$, and $Y'$ are connected, it follows that $\alpha_0$ and $\beta_0$ are isomorphisms.
Therefore $\mathbf{A}_{0}$ is injective.
Hence, $\mathbf{B}_{1}$ is surjective and therefore is an isomorphism.

Finally, since $\mathbf{A}_{n-1} = 0$, we see that
\[
    \partial_{n}:\mathbb{Z}=H_{n}(X\sharp Y) \to H_{n-1}(\mathbb{S}^{n-1}) = \mathbb{Z}
\]
is a self-epimorphism of $\mathbb{Z}$, whence it must be an isomorphism.
Hence
\[
    \mathbf{A}_{n}: 0 = H_n(\mathbb{S}^{n-1}) \to H_{n}(X')\oplus H_{n}(Y')
\]
is a surjection of a zero group, and thus $H_{n}(X')=H_{n}(Y')=0$.
\end{proof}

\begin{corollary}\label{invind}
$H_i(\mathcal{S}^n_g)$, $n\geq3$, is trivial for $2\leq i \leq n-2$, isomorphic to $\mathbb{Z}^g$ for $i\in \{1,n-1\}$, and to $\mathbb{Z}$ for $i\in \{0,n\}$.
\end{corollary}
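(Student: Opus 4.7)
The plan is to prove Corollary~\ref{invind} by induction on $g$, using Proposition~\ref{pr:homologies} as the recursive engine and Künneth formula~\eqref{prod} to seed the induction.

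For the base case $g=0$, the manifold $\mathcal{S}^n_0 = \mathbb{S}^n$ has standard sphere homology, which matches the claim verbatim. For $g=1$, I would apply~\eqref{prod} to obtain
\[
    p_{\mathbb{S}^{n-1}\times\mathbb{S}^1}(t) = (1+t^{n-1})(1+t) = 1 + t + t^{n-1} + t^n.
\]
Crucially, the hypothesis $n\geq 3$ ensures $1 \ne n-1$, so the four monomials do not collide and the Betti numbers are exactly $\beta_0=\beta_1=\beta_{n-1}=\beta_n=1$, agreeing with the corollary for $g=1$.

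For the inductive step I would write $\mathcal{S}^n_g = \mathcal{S}^n_{g-1} \sharp (\mathbb{S}^{n-1}\times\mathbb{S}^1)$ and feed this into Proposition~\ref{pr:homologies}: the groups $H_0$ and $H_n$ remain $\mathbb{Z}$ by connectedness and orientability, while in every degree $1\leq i \leq n-1$ the Betti numbers add. The inductive hypothesis contributes $\beta_1 = \beta_{n-1} = g-1$ and zeros in between; the second summand contributes $1$ in degrees $1$ and $n-1$ and zeros elsewhere. Summing yields $\beta_1(\mathcal{S}^n_g) = \beta_{n-1}(\mathcal{S}^n_g) = g$ and vanishing in the range $[2,n-2]$, which is precisely the claim.

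I do not expect any serious obstacle; the surgery/homology machinery built up in Proposition~\ref{pr:homologies} does the substantive work, and the rest is bookkeeping on Betti numbers. The one point worth flagging explicitly is the role of the hypothesis $n\geq 3$: it guarantees that the nontrivial reduced homology of each summand $\mathbb{S}^{n-1}\times\mathbb{S}^1$ is concentrated in degrees $1$ and $n-1$, both of which lie strictly outside the middle range $[2,n-2]$. Consequently, iterated connected summing only accumulates new classes in dimensions $1$ and $n-1$, and can never create homology in the middle range.
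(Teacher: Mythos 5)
Your proposal is correct and follows essentially the same route as the paper: both use the K\"unneth formula~\eqref{prod} to compute the homology of the building block $\mathbb{S}^{n-1}\times\mathbb{S}^1$ and then iterate the connected-sum formula~\eqref{connected_sum} of Proposition~\ref{pr:homologies} over the $g$ summands (the paper phrases the iteration via Poincar\'e polynomials rather than explicit induction, but the content is identical). Your remark that $n\geq 3$ keeps the degrees $1$ and $n-1$ distinct and outside the middle range is exactly the point on which the computation rests.
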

\begin{proof}
Recall that $\mathbb{S}^{k} \in \mathcal{Z}$ for $k\geq1$ and $p_{\mathbb{S}^k}(t) = 1 + t^k$.
Then applying~\eqref{prod} we get that
\[
    p_{\mathbb{S}^{n-1}\times\mathbb{S}^{1}}(t) =
    p_{\mathbb{S}^{n-1}}(t)\,p_{\mathbb{S}^{1}}(t) = (1+t^{n-1})(1+t) = 1 + t + t^{n-1} + t^n.
\]
Hence, due to~\eqref{connected_sum},
\[
    p_{\mathcal{S}^n_{g}}(t) = 1 + g t + g t^{n-1} + t^n
\]
for $g\geq0$.
The latter formula means that $H_0(\mathcal{S}^n_g)=H_{n}(\mathcal{S}^n_g)=\mathbb{Z}$, $H_1(\mathcal{S}^n_g)=H_{n-1}(\mathcal{S}^n_g)=\mathbb{Z}^g$ and $H_i(\mathcal{S}^n_g)=0$ for $2\leq i \leq n-2$.
\end{proof}

\section{Intersection number}
Let $X,Y$ be orientable smooth closed submanifolds of an orientable smooth manifold $M^n$ such that the intersection $X\cap Y$ is transversal and $\dim\,X + \dim\, Y=n$.
Then $X\cap Y$ consists of finitely many points.
To each intersection point $p\in X\cap Y$ we associate a number $j_p$ equal to $1$ ($-1$), whenever the orientation of the tangent space $T_x M^n$ agrees with (resp. is opposite to) the orientation induced by orientation of direct sum $T_p X \oplus T_p Y$.
Then the following sum
\[
    \sum\limits_{p\in X\cap Y} j_p
\]
is called the \emph{intersection number} of submanifolds $X$ and $Y$.

The following statement is a consequence of~\cite[Theorem~I, \S 70]{ZeTr}.

\begin{proposition}\label{ind_eq}
Let $S^p, S^q$ be spheres of dimensions $p, q$ smoothly embedded into an orientable smooth manifold $M^n$, such that $p+q=n$, and $S^q$ is homological to zero.
Then the intersection number of spheres $S^p, S^q$ equals to zero.
\end{proposition}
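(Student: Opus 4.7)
The plan is to reduce the statement to the well-known fact that the intersection number, defined via the signed count $\sum_{p\in X\cap Y} j_p$, is a homological invariant: it depends only on the integer homology classes $[S^p]\in H_p(M^n)$ and $[S^q]\in H_q(M^n)$, and in fact descends to a bilinear pairing
\[
    H_p(M^n)\otimes H_q(M^n) \to \mathbb{Z}.
\]
Granting this, the conclusion is immediate: by hypothesis $[S^q]=0$, so $[S^p]\cdot[S^q]=0$, and this number is by definition the intersection number of $S^p$ and $S^q$.

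This homological invariance is precisely the content of the Seifert-Threlfall theorem cited in the statement, so the primary step is simply to invoke that result. For a more self-contained argument one proceeds as follows. Since $[S^q]=0$, the sphere $S^q$ bounds a smooth singular $(q+1)$-chain $W$; after a generic perturbation keeping $\partial W=S^q$ fixed, one may assume that $W$ meets $S^p$ transversely, so that $S^p\cap W$ is a compact oriented smooth $1$-manifold $\Gamma$. Its boundary, with orientations inherited from the transverse intersection, is exactly $S^p\cap S^q$ equipped with the signs $j_p$. Since the signed count of boundary points of any compact oriented $1$-manifold vanishes, the intersection number is zero.

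The main obstacle in the self-contained version is bookkeeping rather than ideas: one must know that a null-homologous closed oriented submanifold bounds in a sufficiently geometric sense to apply the classification of compact $1$-manifolds (and not merely as an abstract singular chain), and one must verify that the boundary orientations on $\Gamma$ reproduce exactly the local signs $j_p$ from the definition. Both points are handled in the cited Seifert-Threlfall reference, which is why the cleanest route is to quote that theorem directly and read off the proposition as a one-line corollary.
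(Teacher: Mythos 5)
Your proposal matches the paper exactly: the paper offers no argument of its own beyond stating that the proposition "is a consequence of" Seifert--Threlfall, Theorem I, \S 70, which is precisely the homological invariance of the intersection pairing that you invoke. Your additional sketch of the self-contained bounding-chain argument goes beyond what the paper records, but the essential step --- quoting that theorem and reading off the vanishing from $[S^q]=0$ --- is the same.
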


\section{Proof of Theorem~\ref{good_index}}
Let $f^t$ be a gradient-like flow on $\mathcal{S}^n_{g}$, $g>0$, having no heteroclinic intersections, that is invariant manifolds of distinct saddle equilibria of $f^t$ do not intersect each other.
We should show that the set of its saddle equilibria consists of points whose Morse index equals either $1$ or $(n-1)$.

Suppose the contrary, that is the non-wandering set of $f^t$ contains a saddle $\sigma$ with Morse index $i\in \{2,\dots,n-2\}$.
Since the invariant manifolds of distinct saddle equilibria do not intersect each other, it follows from~\cite[Theorem~2.1]{Sm} that there exists a unique pair $\alpha, \omega$ of sink and source equilibrium such that $cl\,{W^{s}_{\sigma}}=W^s_{\sigma}\cup \alpha$ and $cl\,{W^{u}_{\sigma}}=W^u_{\sigma}\cup \omega$.
Therefore, the closures of the stable and unstable manifolds of $\sigma$ are spheres of dimensions $n-i$ and $i$, respectively, and those spheres are smoothly embedded at all points, except possibly $\alpha$ and $\omega$.
Notice that those spheres intersect transversally only at the point $\sigma$, hence their intersection number equals either $1$ or $-1$.
On the other hand, due to Corollary~\ref{invind}, $H_{i}(\mathcal{S}^n_g) = H_{n-i}(\mathcal{S}^n_g)=0$ for $2\leq i \leq n-2$.
Hence, due to Proposition~\ref{ind_eq}, the intersection number of spheres $cl\,{W^{u}_{\sigma}}$, $cl\,{W^{s}_{\sigma}}$ must be zero.
This contradiction proves the absence of any saddle equilibrium states with Morse index equal to $i\in \{2,\dots,n-2\}$.

Let $c_i$ be a number of equilibria of $f^t$ having Morse index equals $i$.
Then $\nu_{f^t} = c_1+c_{n-1}$ and $\mu_{f^t} = c_0 +c_{n}$.
Moreover, due to Morse inequalities (see~\cite[Theorem~4.1]{Smale60a}), $c_i\geq\beta_i(\mathcal{S}^{n}_{g})$.
According to Corollary~\ref{invind},  $c_1,c_{n-1}\geq g$, so $\nu_{f^t} = 2g+k$ for some $k\geq0$.
Hence, $\mu_{f^t} = \nu_{f^t} +2 -2g = k+2$.

% \bibliographystyle{amsplain}
% \bibliography{bibliography}

\begin{thebibliography}{1}

\bibitem{BGMP02}
C.~Bonatti, V.~Grines, V.~Medvedev, and E.~P\'{e}cou, \emph{Three-manifolds
  admitting {M}orse-{S}male diffeomorphisms without heteroclinic curves},
  Topology Appl. \textbf{117} (2002), no.~3, 335--344. \MR{1874094}

\bibitem{GeenbergHarper}
Marvin~J. Greenberg and John~R. Harper, \emph{Algebraic topology}, Mathematics
  Lecture Note Series, vol.~58, Benjamin/Cummings Publishing Co., Inc.,
  Advanced Book Program, Reading, Mass., 1981, A first course. \MR{643101}

\bibitem{GrGuPoMa-non}
V.~Grines, E.~Gurevich, O.~Pochinka, and D.~Malyshev, \emph{On topological
  classification of {M}orse-{S}male diffeomorphisms on the sphere {$S^n$} {$(n
  > 3)$}}, Nonlinearity \textbf{33} (2020), no.~12, 7088--7113. \MR{4173571}

\bibitem{GrGuPo-matan}
V.~Z. Grines, E.~A. Gurevich, and O.~V. Pochinka, \emph{Topological
  classification of {M}orse-{S}male diffeomorphisms without heteroclinic
  intersections}, J. Math. Sci. (N.Y.) \textbf{208} (2015), no.~1, Problems in
  mathematical analysis. No. 79 (Russian), 81--90. \MR{3392132}

\bibitem{Pil78}
S.~Ju. Piljugin, \emph{Phase diagrams that determine {M}orse-{S}male systems
  without periodic trajectories on spheres}, Differencialnye Uravnenija
  \textbf{14} (1978), no.~2, 245--254, 386. \MR{0501181}

\bibitem{ZeTr}
Herbert Seifert and William Threlfall, \emph{Seifert and {T}hrelfall: a
  textbook of topology}, Pure and Applied Mathematics, vol.~89, Academic Press,
  Inc. [Harcourt Brace Jovanovich, Publishers], New York-London, 1980,
  Translated from the German edition of 1934 by Michael A. Goldman, With a
  preface by Joan S. Birman, With ``Topology of $3$-dimensional fibered
  spaces'' by Seifert, Translated from the German by Wolfgang Heil. \MR{575168}

\bibitem{Sm}
S.~Smale, \emph{Differentiable dynamical systems}, Bull. Amer. Math. Soc.
  \textbf{73} (1967), 747--817. \MR{228014}

\bibitem{Smale60a}
Stephen Smale, \emph{Morse inequalities for a dynamical system}, Bull. Amer.
  Math. Soc. \textbf{66} (1960), 43--49. \MR{117745}

\bibitem{Sm61}
\bysame, \emph{On gradient dynamical systems}, Ann. of Math. (2) \textbf{74}
  (1961), 199--206. \MR{133139}

\end{thebibliography}

\providecommand{\bysame}{\leavevmode\hbox to3em{\hrulefill}\thinspace}
\providecommand{\MR}{\relax\ifhmode\unskip\space\fi MR }
% \MRhref is called by the amsart/book/proc definition of \MR.
\providecommand{\MRhref}[2]{%
  \href{http://www.ams.org/mathscinet-getitem?mr=#1}{#2}
}
\providecommand{\href}[2]{#2}

\end{document}